\newcommand{\rr}{\mathbb{R}}
\newcommand{\nn}{\mathbb{N}}
\renewcommand{\d}{\mbox{d}}
\renewcommand{\.}{\cdot}
\newcommand{\e}{\epsilon}
\newcommand{\s}{\sigma}
\newcommand{\test}{\varphi}
\newcommand{\dualpair}[2]{\langle #1,#2 \rangle}
\newcommand{\norm}[2]{\|#1\|_{#2}}
\newcommand{\X}{\mathcal{X}}
\title{On the Effects of Bohm's Potential on a Stationary Macroscopic System of Self-Interacting Particles}
\author{Ren\'e Pinnau\thanks{Department of Technomathematics, University of Kaiserslautern, 67663 Kaiserslautern, Germany} \and Oliver Tse\thanks{Department of Technomathematics, University of Kaiserslautern, 67663 Kaiserslautern, Germany ({\tt tse@mathematik.uni-kl.de}).}}
\begin{document}

\maketitle

\begin{abstract}
 We consider a macroscopic model describing a system of self-gravitating particles. We study the existence and uniqueness of non-negative stationary solutions and allude the differences to results obtained from classical gravitational models. The problem is considered on a bounded domain up to three space dimension, subject to Neumann boundary condition for the particle density, and Dirichlet boundary condition for the self-interacting potential. Finally, we show numerical simulations that affirm our findings.
\end{abstract}

\begin{keywords} 
 Second order elliptic systems, Bohm's potential, self-interaction.
\end{keywords}

\begin{AMS}
 
\end{AMS}

\pagestyle{myheadings}
\thispagestyle{plain}
\markboth{Ren\'e Pinnau and Oliver Tse}{Macroscopic Model for Quantum Gravitation}

\section{Introduction}
Consider a stationary macroscopic system of self-interacting particles with {\em Bohm's porential}, which describe the \emph{normalized density} $n\ge 0$,
\begin{subequations}\label{eq::system}
\begin{equation}
 -\e^2\sqrt{n}^{-1}\Delta\sqrt{n} + \log{n} - \s\Phi = F \;\;\mbox{in}\;\Omega,\quad
 \partial_\nu \sqrt{n} = 0 \;\;\mbox{on}\;\Gamma,
\end{equation}
the {\em quasi Fermi-level} $F$,
\begin{equation}
 -\mbox{div}\left(n\nabla F\right) = 0 \;\;\mbox{in}\;\Omega,\quad 
 n\,\partial_\nu F = 0 \;\;\mbox{on}\;\Gamma,
\end{equation}
and the \emph{potential} $\Phi$ due to self-interaction in a particle system,
\begin{equation}
 -\Delta \Phi = n \;\;\mbox{in}\;\Omega,\quad 
 \Phi = 0 \;\;\mbox{on}\;\Gamma,
\end{equation}
\end{subequations}
where $\nu$ is the outer normal to the convex, bounded domain $\Omega\subset\rr^d$, $d\le 3$ with Lipschitz boundary $\Gamma$, $\e>0$ is the scaled Planck constant and $|\s|\in[0,\infty)$ is the mass of the system of self-interacting particles, where sign($\s$) dictates the nature of the interaction involved. In this case, positive mass $\s>0$ would indicate the presence of self-attraction, while negative mass $\s<0$ indicates self-repulsion. By passing to the limit $\e\to 0$, we formally recover either the classical drift-diffusion equations ($\s<0$) \cite{abdallah} or a model for a system of self-gravitating particles ($\s>0$) \cite{biler}.

A simple observation of (\ref{eq::system}b) suggests that for any positive density $n>0$, we obtain a solution $F\in\rr$. In fact, any constant function is a solution. However, we shall see below that this constant solution is fixed due to the normality of $n$.

In order to bring this system of equations into a system similar to that of the classical equations for self-interacting particles (c.f.~\cite{abdallah,biler}), we introduce the \emph{quasi potential} $u:= \log n - F$. Assuming $n>0$, we insert $F=\log n - u$ into (\ref{eq::system}b) to obtain
\begin{equation}\label{eq::classical}
 -\Delta n + \mbox{div}\left(n\nabla u\right) = 0 \;\;\mbox{in}\;\Omega,\quad
 \partial_\nu n - n\,\partial_\nu u = 0 \;\;\mbox{on}\;\Gamma.
\end{equation}
At this point, one directly sees the resemblance to the classical equations for self-gravitating particles if we simply set $u=\Phi$ in (\ref{eq::classical}) and couple it with equation (\ref{eq::system}c) for the potential $\Phi$. For this reason, we call $u$ the quasi potential. Clearly, we may further rewrite (\ref{eq::classical}) in the equivalent form
\[
 -\mbox{div}\left(e^{u}\nabla\left(ne^{-u}\right) \right) = 0 \;\;\mbox{in}\;\Omega,\quad 
 \partial_\nu\left(ne^{-u}\right) = 0 \;\;\mbox{on}\;\Gamma.
\]
This allows us to relate $n$ and $u$ via $n=\alpha e^{u}$ for some constant $\alpha>0$. Since $n$ is normalized, i.e., $\int_\Omega n\,\d{x}=1$, we deduce that $\alpha = \|e^u\|_{L^1(\Omega)}^{-1}$. Notice that, by fixing $\alpha$, we also fix the quasi Fermi-level, which is explicitly given by $F=\log(\alpha)$.

Introducing this into (\ref{eq::system}) leads to the coupled system for $(u,\Phi)$ given by an elliptic equation with natural gradient growth for $u$
\begin{subequations}\label{eq::system2}
 \begin{equation}
  -\frac{\e^2}{2}\Delta u + u = \frac{\e^2}{4}|\nabla u|^2 + \s\Phi \;\;\mbox{in}\;\Omega,\quad 
  \partial_\nu u = 0 \;\;\mbox{on}\;\Gamma,
 \end{equation}
and the equation for the potential $\Phi$,
 \begin{equation}
  -\Delta \Phi = \|e^u\|_{L^1(\Omega)}^{-1} e^u \;\;\mbox{in}\;\Omega,\quad
  \Phi = 0 \;\;\mbox{on}\;\Gamma.
 \end{equation}
\end{subequations}
Note that system (\ref{eq::system2}) is equivalent to system (\ref{eq::system}) if $n>0$, or equivalently, if $u$ is an essentially bounded function. The existence of bounded weak solutions $(u,\Phi)$ will be shown for (\ref{eq::system2}), thereby implying the existence of solutions for (\ref{eq::system}). 

We introduce the short hand $\X$ to denote the space 
\[
 \X:=H^1(\Omega)\cap L^\infty(\Omega).
\]

\begin{theorem}\label{thm::main}
 Let $d\in\{2,3\}$ and the mass $|\s|\in[0,+\infty)$ be given. Then problem (\ref{eq::system2}) has a solution $(u,\Phi)\in \X\times \X$. Consequently, $(n,F,\Phi)$ with 
 \[
  \sqrt{n} = \|e^u\|_{L^1(\Omega)}^{-\frac{1}{2}}e^{u/2}\in \X\quad\mbox{and}\quad F = \log n - u = -\log \|e^u\|_{L^1(\Omega)}\in\rr,
 \]
 is a solution of (\ref{eq::system}). Moreover, there exists $\theta\in(0,1)$, such that $\theta\leq n\leq 1/\theta$.
\end{theorem}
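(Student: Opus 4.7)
The strategy is a Leray--Schauder fixed-point argument on the map that decouples the two subproblems of (\ref{eq::system2}). For a candidate $v\in L^\infty(\Omega)$, define the normalized density $n_v := \|e^v\|_{L^1(\Omega)}^{-1}e^v \in L^\infty(\Omega)$ and let $\Phi_v \in H^1_0(\Omega)$ be the unique solution of the Poisson problem (\ref{eq::system2}b) with this right-hand side; since $d\le 3$, standard elliptic regularity yields $\Phi_v \in W^{2,p}(\Omega)\cap C^{0,\alpha}(\overline{\Omega}) \subset \X$. With $\Phi_v$ at hand, one then solves (\ref{eq::system2}a) for $u$, and the fixed-point map is $\mathcal{T}:v\mapsto u$.

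The technical heart is the solvability of (\ref{eq::system2}a) for given data $\s\Phi_v \in L^\infty(\Omega)$, whose quadratic gradient nonlinearity $\tfrac{\e^2}{4}|\nabla u|^2$ rules out a direct variational treatment in $\X$. I would therefore pass to the exponential substitution $w := e^{u/2}$, which converts (\ref{eq::system2}a) into the semilinear Neumann problem
\[
 -\e^2 \Delta w + 2w \log w = \s\Phi_v\, w \;\;\mbox{in}\;\Omega,\quad \partial_\nu w = 0 \;\;\mbox{on}\;\Gamma,
\]
i.e.\ the Euler--Lagrange equation of
\[
 \mathcal{J}(w) = \frac{\e^2}{2}\int_\Omega |\nabla w|^2\,\d{x} + \int_\Omega \bigl(w^2\log w - \tfrac{1}{2}w^2\bigr)\,\d{x} - \frac{\s}{2}\int_\Omega \Phi_v\, w^2\,\d{x}
\]
on $H^1(\Omega)$. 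A direct-method minimization, combined with a sub-/supersolution comparison against the constants $\underline{w}:=e^{-|\s|\|\Phi_v\|_\infty/2}$ and $\overline{w}:=e^{|\s|\|\Phi_v\|_\infty/2}$ (which are genuine sub-/supersolutions by monotonicity of $x\mapsto x\log x$ on $(0,\infty)$ past $1/e$), produces a positive $w\in H^1\cap L^\infty(\Omega)$ with $\underline{w}\le w\le \overline{w}$ pointwise. Reverting to $u=2\log w\in\X$ yields a solution of (\ref{eq::system2}a) satisfying $\|u\|_\infty \le |\s|\|\Phi_v\|_\infty$.

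With $\mathcal{T}$ well-defined on $L^\infty(\Omega)$, compactness is inherited from the compact embedding $W^{2,p}\hookrightarrow\hookrightarrow L^\infty$ at the $\Phi$-step, and continuity follows from standard stability of the subproblems under $L^\infty$ convergence. Leray--Schauder then delivers a fixed point $(u,\Phi)\in\X\times\X$. The second assertion is immediate: setting $\sqrt{n} = \|e^u\|_{L^1}^{-1/2}e^{u/2}$ and $F = -\log\|e^u\|_{L^1}$ gives a solution of (\ref{eq::system}) with the claimed regularity, and from the pointwise bound $\|u\|_\infty\le M$ one gets $e^{-2M}/|\Omega|\le n \le e^{2M}/|\Omega|$, which yields $\theta\in(0,1)$ with $\theta\le n\le 1/\theta$. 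The step I expect to be the hardest is closing the a priori bound along the Leray--Schauder homotopy, especially in the self-attractive regime $\s>0$: the supersolution estimate gives $\|u\|_\infty\le |\s|\|\Phi_v\|_\infty$, but $\|\Phi_v\|_\infty$ is controlled by $\|n_v\|_{L^p}$, which grows exponentially in $\|v\|_\infty$, so the naive chain of estimates is self-referential. Closing it will require exploiting the $L^1$-normalization $\int n_v = 1$, presumably through interpolation between $L^1$ and higher $L^p$ norms or a Moser-type iteration tied to the Poisson step, to extract a bound on $\|u\|_\infty$ that is independent of $v$ within the fixed-point class.
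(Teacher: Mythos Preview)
Your proposal leaves the decisive step open: the a priori bound that makes the fixed-point argument close. You correctly identify the difficulty---chaining $\|u\|_\infty\le|\s|\|\Phi_v\|_\infty$ to an $L^p$ estimate of $n_v$ is self-referential---but the fix you gesture at (``interpolation\ldots or a Moser-type iteration'') is precisely the content of the theorem, and without it there is no proof. The paper resolves this with a specific choice you do not make: it takes $\Phi$, not $u$, as the fixed-point variable, and works in $L^q(\Omega)$ for some $q\in(\tfrac{d}{2},\tfrac{d}{d-2})$ rather than in $L^\infty$. The reason this closes immediately is that $\|-\Delta\Phi\|_{L^1(\Omega)}=\|n\|_{L^1(\Omega)}=1$ by the normalization, and the sharp estimate of Proposition~\ref{prop::estimate} (the Cassani embedding $W^{2,1}_{\Delta,0}\hookrightarrow L_{\exp}$ for $d=2$, $\hookrightarrow L^{d/(d-2),\infty}$ for $d\ge 3$) then gives the \emph{universal} bound $\|\Phi\|_{L^q}\le c_q\mu_d^{-1}$, independent of the input. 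So the map $w\mapsto u\mapsto \Phi$ sends the ball $\{\|w\|_{L^q}\le c_q\mu_d^{-1},\,w\ge 0\}$ into itself, and Schauder applies with no homotopy parameter and no bootstrap. Your $L^\infty$ framework cannot exploit this, because there is no uniform $L^\infty$ bound on $\Phi$ coming from $\|\Delta\Phi\|_{L^1}=1$ alone.

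A secondary point: your treatment of the subproblem (\ref{eq::system2}a) via the Hopf--Cole substitution $w=e^{u/2}$ and constant sub/super\-solutions is a legitimate alternative to the paper's truncation argument (Theorem~\ref{thm::existence}), but it hard-wires the requirement $\Phi\in L^\infty$ into the scheme through the barriers $e^{\pm|\s|\|\Phi\|_\infty/2}$. The paper's Lemma~\ref{lem::linfty&h1} instead delivers $u\in\X$ from $\Phi\in L^p$, $p>d/2$, which is exactly the regularity that the $L^q$ fixed-point space provides. In short, the missing idea is the pairing of the $L^1$ normalization with the $W^{2,1}_{\Delta,0}$ estimate at the Poisson step; once you have that, the loop closes in one line and the Leray--Schauder bound you are worried about never arises.
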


Observe that $\s>0$ can be chosen arbitrarily large as opposed to classical self-gravitating particles, where a threshold for existence exists. In this sense, system (\ref{eq::system}) can be thought of as a regularization of the classical self-gravitating system.

Let us discuss the techniques used to show existence of solutions for system (\ref{eq::system2}). The solvability of (\ref{eq::system2}a) and its variants with homogeneous Dirichlet boundary data were shown in the papers \cite{dall,ferone,montenegro}. Moreover, if $\Phi\in L^p(\Omega)$ with $p>d/2$, then $u\in \X$. Adopting the methods used in \cite{dall}, we show in Section~\ref{sec::2} that this holds true also for homogeneous Neumann boundary data. In this case, systems (\ref{eq::system}) and (\ref{eq::system2}) are equivalent. Furthermore, we obtain from \cite{cassani} the following sharp estimates for functions in the space $W^{2,1}_{\Delta,0}=\overline{\mathcal{D}(\Omega)}^{\|\Delta \.\|_{L^1(\Omega)}}$.

\begin{proposition}\label{prop::estimate}
 For any $f\in W^{2,1}_{\Delta,0}$ with $f\ge 0$ in $\Omega$ we have the estimates
 \begin{eqnarray*}
  \|f\|_{L_{\exp}(\Omega)} &\leq& (8\pi)^{-1}\|\Delta f\|_{L^1(\Omega)},\quad\hspace*{0.4em} d=2,\\
  \|f\|_{L^{\frac{d}{d-2},\infty}(\Omega)} &\leq& (2\gamma_d)^{-1}\|\Delta f\|_{L^1(\Omega)},\quad d\geq 3,
 \end{eqnarray*}
with $\gamma_d = \omega_{d-1}^{2/d}(d-2)d^{\frac{d-2}{d}}$, where $\omega_{d-1}$ is the measure of the unit sphere in $\rr^d$. The constants given above are the best possible, independently of the domain.
\end{proposition}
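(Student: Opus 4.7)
The plan is to reduce to smooth, compactly supported $f$ by density in $W^{2,1}_{\Delta,0}$, then obtain both estimates from a pointwise representation via the Dirichlet Green's function and symmetrization.

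\textbf{Step 1: Reduction and representation.} By definition of the norm and density, it suffices to prove both estimates for $f\in\mathcal{D}(\Omega)$ with $f\ge 0$. For such $f$, denoting by $G_\Omega$ the Dirichlet Green's function of $-\Delta$ on $\Omega$, one has the pointwise identity $f(x)=\int_\Omega G_\Omega(x,y)(-\Delta f(y))\,\d y$, and therefore
\[
 0\le f(x)\le \int_\Omega G_\Omega(x,y)\,|\Delta f(y)|\,\d y.
\]
Since $G_\Omega(x,y)\le G_{\rr^d}(x,y)$ by the maximum principle, this reduces matters to an estimate for the Riesz potential $I_2|\Delta f|(x)=c_d\int |x-y|^{2-d}|\Delta f(y)|\,\d y$ (respectively, the logarithmic potential when $d=2$).

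\textbf{Step 2: Weak-type / exponential estimate.} For $d\ge 3$, I would establish the weak-type bound $I_2:L^1\to L^{d/(d-2),\infty}$ directly from the distribution function: for each $t>0$, use Chebyshev together with the layer-cake decomposition of the kernel and the fact that $|\{y:|x-y|^{2-d}>s\}|=\omega_{d-1}s^{-d/(d-2)}/d$. For $d=2$, the logarithmic kernel instead produces exponential integrability, the argument being a direct computation of $\int_\Omega \exp(\beta f/\|\Delta f\|_{L^1})\,\d x$ after applying Jensen's inequality to the representation formula and integrating in radial coordinates.

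\textbf{Step 3: Sharp constant via Talenti symmetrization.} To obtain the specific constants $(8\pi)^{-1}$ and $(2\gamma_d)^{-1}$ and their independence of $\Omega$, I would compare $f$ with the solution $f^\sharp$ on the Euclidean ball $\Omega^\sharp$ of equal volume having datum $(-\Delta f)^*$ (the decreasing rearrangement), applying Talenti's pointwise rearrangement inequality $f^*\le f^\sharp$. Both $\|f\|_{L_{\exp}}$ and $\|f\|_{L^{d/(d-2),\infty}}$ are rearrangement-invariant, so the problem is reduced to the radial case, where one evaluates the norms explicitly on fundamental-solution-like profiles. Sharpness follows by testing against a sequence of mollifications of the fundamental solution of $-\Delta$ centered at an interior point; in this limit the ratio saturates, which simultaneously establishes that the constants cannot be improved and are independent of the domain.

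\textbf{Main obstacle.} The routine part is the weak-type / exponential-integrability bound itself; the subtle point is the identification of the optimal constants and, in particular, verifying that the limiting concentration procedure is admissible in $W^{2,1}_{\Delta,0}$ (the fundamental solution lies only at the boundary of the space, so one must truncate carefully and track the constants through the limit). In dimension two this amounts to the sharp Moser–Trudinger-type computation, which is the most delicate step. Since a fully rigorous treatment is carried out in \cite{cassani}, I would cite that source for the sharp-constant part and only reproduce the representation and symmetrization reduction above.
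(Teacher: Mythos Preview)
The paper does not give its own proof of this proposition: it is quoted verbatim as a result from \cite{cassani}, with no argument supplied. Your proposal therefore goes well beyond what the paper does; since you also end by deferring the sharp-constant computation to \cite{cassani}, your write-up is consistent with the paper's treatment, only more detailed. The outline you give (Green's function representation, domination by the Riesz/logarithmic potential, Talenti symmetrization to reduce to the radial case, and concentration at the fundamental solution for sharpness) is the standard route and matches the approach in the cited reference, so nothing in it conflicts with the paper.
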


Here we used $L_{\exp}(\Omega)$ to denote the Zygmund space, whose elements $f$ satisfy $\int_\Omega e^{\lambda f}\,\d{x}<\infty$ for some $\lambda=\lambda(f)>0$, and $L^{p,\infty}(\Omega)$ to denote the classical weak-$L^p$ space (see~\cite{bennett}). These spaces are related to each other and to the $L^p$ spaces by the following continuous embeddings for $1<p<\infty$,
\begin{equation}\label{eq::embeddings}
 L^\infty(\Omega)\hookrightarrow L_{\exp}(\Omega)\hookrightarrow L^p(\Omega)\hookrightarrow L^{p,\infty}(\Omega)\hookrightarrow L^1(\Omega).
\end{equation}
The estimates in Proposition~\ref{prop::estimate} necessarily implies that the $L^q$-norm of $\Phi$, for any $q\in(1,\frac{d}{d-2})$, is uniformly bounded for $d\ge 2$. In particular, we have
\begin{eqnarray}\label{eq::estimate}
  \|\Phi\|_{L^q(\Omega)} \leq c_q\mu_d^{-1},\quad\mbox{with}\quad \mu_d = \left\lbrace
 \begin{array}{ll}
  8\pi, & d=2, \\
  2\gamma_d, & d\ge 3,
 \end{array}\right.
\end{eqnarray}
where $c_q$ is the embedding constant obtained from (\ref{eq::embeddings}). 

Therefore, an application of the Schauder fixed point theorem on a self-mapping for the potential $\Phi$ leads to its existence in $L^q(\Omega)$ for some $q\in(\frac{d}{2},\frac{d}{d-2})$, and consequently also for $u$ and $n$, which yields the existence result.
For sufficiently $\s$ we further obtain uniqueness of solutions, given by the following result.

\begin{theorem}\label{thm::unique}
 Let $\theta\in(0,1)$ and $n\in\X$ with $\theta \leq n\leq 1/\theta$. There exist constants $c_0=c_0(d,\Omega)>0$ and $c_1=c_1(d,\Omega,\theta)>0$ such that for 
 \[
  |\s|<\mu_d\sqrt{c_0^2 + 2\e^2c_1^2},
 \] 
 the solutions of (\ref{eq::system}) are equal almost everywhere in $\Omega$.
\end{theorem}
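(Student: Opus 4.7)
The plan is a standard energy-type argument applied to the difference of two solutions in the equivalent formulation (\ref{eq::system2}). Given $(u_i,\Phi_i)\in\X\times\X$, $i=1,2$, with associated densities $n_i=\norm{e^{u_i}}{L^1(\Omega)}^{-1}e^{u_i}\in[\theta,1/\theta]$, set $v:=u_1-u_2$, $\psi:=\Phi_1-\Phi_2$ and $\bar u:=\frac{1}{2}(u_1+u_2)$. Subtracting the equations for $u_i$ and $\Phi_i$ yields
\[
 -\frac{\e^2}{2}\Delta v+v = \frac{\e^2}{2}\nabla\bar u\.\nabla v+\s\psi \;\;\mbox{in}\;\Omega,\quad \partial_\nu v=0 \;\;\mbox{on}\;\Gamma,
\]
together with $-\Delta\psi=n_1-n_2$ in $\Omega$ and $\psi=0$ on $\Gamma$. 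The goal is a coercive $H^1$-estimate on $v$ with right-hand side of order $|\s|/\mu_d$, so that the smallness threshold forces $v\equiv 0$.

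First I would test the $v$-equation with $v$ and handle the quadratic gradient coupling by integration by parts, using $\partial_\nu\bar u=0$: $\int(\nabla\bar u\.\nabla v)v\,\d x=-\frac{1}{2}\int v^2\Delta\bar u\,\d x$. Substituting $\Delta\bar u$ from (\ref{eq::system2}a) turns the right-hand side into a linear combination of terms in $v^2\bar u$, $v^2|\nabla u_i|^2$ and $v^2\bar\Phi$. Under the hypothesis $n_i\in[\theta,1/\theta]$ we have uniform $L^\infty$-bounds on $u_i$ (with $F_i=-\log\norm{e^{u_i}}{L^1(\Omega)}$ controlled via the normalization $\int n_i\,\d x=1$), on $v$, and on $\Phi_i$ (by elliptic regularity for (\ref{eq::system2}b)); together with the $L^2$-bound on $\nabla u_i$ used already in the existence proof, these allow all gradient-coupled contributions to be absorbed into the coercive part $\frac{\e^2}{2}\norm{\nabla v}{L^2(\Omega)}^2+\norm{v}{L^2(\Omega)}^2$ via Young's inequality, with constants depending only on $d$, $\Omega$, and $\theta$.

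Next I would estimate the coupling $\s\int\psi v\,\d x$. Proposition~\ref{prop::estimate} together with the embeddings (\ref{eq::embeddings}) yields $\norm{\psi}{L^q(\Omega)}\le c_q\mu_d^{-1}\norm{n_1-n_2}{L^1(\Omega)}$ for a suitable $q\in(1,\frac{d}{d-2})$, while the Lipschitz dependence of $u\mapsto e^u/\norm{e^u}{L^1(\Omega)}$ under $n_i\in[\theta,1/\theta]$ gives $\norm{n_1-n_2}{L^1(\Omega)}\le C(\theta)\norm{v}{L^1(\Omega)}$. Controlling $\norm{v}{L^{q'}(\Omega)}$ by a combination of $\norm{v}{L^2(\Omega)}$ and $\norm{\nabla v}{L^2(\Omega)}$ via Hölder and Sobolev embeddings, then collecting terms and matching the coefficients of $\norm{v}{L^2(\Omega)}^2$ and $\e^2\norm{\nabla v}{L^2(\Omega)}^2$ separately, produces the threshold $|\s|<\mu_d\sqrt{c_0^2+2\e^2c_1^2}$; if satisfied, the inequality forces $v\equiv 0$, whence $n_1=n_2$ by the Lipschitz estimate and $\Phi_1=\Phi_2$ from the Poisson equation. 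The hard part will be the quadratic gradient coupling $\nabla\bar u\.\nabla v$: it admits no a~priori bound independent of the solution, and the $v^2|\nabla u_i|^2$ terms emerging after integration by parts require the full $L^\infty$-bound on $v$ from $n_i\in[\theta,1/\theta]$ to be absorbed, which is what forces $c_1$ to depend on $\theta$, while the cleaner Poisson coupling leaves $c_0$ as a pure embedding/elliptic-regularity constant.
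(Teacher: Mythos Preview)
Your plan has a genuine gap at precisely the point you flag as ``the hard part''. After testing with $v$ and integrating by parts you arrive at a right-hand side containing
\[
 \frac{\e^2}{16}\int_\Omega v^2\bigl(|\nabla u_1|^2+|\nabla u_2|^2\bigr)\,\d x.
\]
With the information available --- $\|v\|_{L^\infty(\Omega)}\le C(\theta)$ and $\|\nabla u_i\|_{L^2(\Omega)}\le M_1$ --- this term is only bounded by the \emph{constant} $\frac{\e^2}{8}C(\theta)^2 M_1^2$; it is not controlled by any multiple of $\|v\|_{L^2(\Omega)}^2$ or $\|\nabla v\|_{L^2(\Omega)}^2$, so there is nothing to ``absorb'' via Young's inequality. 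To turn it into something quadratic in a norm of $v$ you would need $\nabla u_i\in L^p(\Omega)$ for some $p>2$ (e.g.\ $p=d$ via H\"older and Sobolev), and no such bound is provided by the existence theory of Section~\ref{sec::2}. Moreover, even the $L^2$-bound $M_1$ from Lemma~\ref{lem::linfty&h1} depends on $\|\s\Phi\|_{L^p(\Omega)}$ and hence on $|\s|$ itself, so any threshold produced this way would be implicit in $\s$ rather than of the clean form stated in Theorem~\ref{thm::unique}.

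The paper sidesteps this entirely by \emph{not} passing to the $u$-formulation. It works with the original variable $n$ through the operator
\[
 \dualpair{A(n)}{\test}=\int_\Omega\Bigl(-\e^2\frac{\Delta\sqrt{n}}{\sqrt{n}}+\log n\Bigr)\test\,\d x,
\]
uses the mean-value theorem to write $\dualpair{A(n_1)-A(n_2)}{\delta n}=\dualpair{A'(n_\tau)[\delta n]}{\delta n}$ for a convex combination $n_\tau$, and exploits the exact identity
\[
 \dualpair{A'(n_\tau)[\delta n]}{\delta n}=\e^2\int_\Omega n_\tau\Bigl|\nabla\Bigl(\frac{\delta n}{n_\tau}\Bigr)\Bigr|^2\,\d x+\int_\Omega\frac{|\delta n|^2}{n_\tau}\,\d x.
\]
This is the convexity of the Fisher information in disguise and produces a coercive lower bound with \emph{no} cross term in $\nabla u_i$. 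The first integral is then bounded below by $c_K^2\|\delta n\|_{L^s(\Omega)}^2$ via the Pinnau--Unterreiter type estimate (Proposition~\ref{prop::bohm}), and the second by $\|\delta n\|_{L^1(\Omega)}^2$ using $\int_\Omega n_\tau\,\d x=1$; combining with the sharp Poisson estimate $\|\delta\Phi\|_{L^q(\Omega)}\le c_q\mu_d^{-1}\|\delta n\|_{L^1(\Omega)}$ yields the stated threshold directly. The key lemma you are missing is this structural coercivity of $A'$, which the quadratic-gradient formulation (\ref{eq::system2}a) obscures.
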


To our knowledge, this estimate for uniqueness appears to be new for both attractive and repulsive potentials. For $\s<0$, it is known that in the classical case $\e=0$, uniqueness depends on the smallness of the applied voltage. As a matter of fact, the performance of many semi-conductor devices (thyristors) depends on the existence of multiple solutions (c.f.~\cite{pinnau} and references therein).

Unfortunately, this estimate is not sharp, since for $d=2$ and $\s>0$, it is known that uniqueness is valid for $\s<\mu_2$ (c.f.~\cite{caglioti2}). Nevertheless, the above estimate provides a convenient relationship between uniqueness and the Bohm potential ($\e>0$).

\section{Elliptic equation with natural gradient growth}\label{sec::2}

We begin by providing results for the subproblem (\ref{eq::system2}a) required to prove Theorem~\ref{thm::main}.

\begin{definition}
 A function $u\in\X$ is said to be a solution of (\ref{eq::system2}a) if it satisfies
 \begin{equation}\label{eq::weak}
  \frac{\e^2}{2}\int_\Omega\nabla u\.\nabla\test\,\emph{\d}{x} + \int_\Omega u\,\test\,\emph{\d}{x} = \frac{\e^2}{4}\int_\Omega|\nabla u|^2\test\,\emph{\d}{x} + \s\int_\Omega \Phi\test\,\emph{\d}{x}
 \end{equation}
 for every function $\test\in\X$.
\end{definition}

\begin{theorem}\label{thm::existence}
 Suppose $\Phi\in L^p(\Omega)$, $p>d/2$. Then there is a solution $u\in\X$ of problem (\ref{eq::weak}). Furthermore, we have $e^{u/2}\in\X$.
\end{theorem}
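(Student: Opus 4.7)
The plan is to follow the approximation scheme of \cite{dall} adapted to homogeneous Neumann data. I introduce a bounded cutoff of the natural growth, $g_k(s):=s/(1+k^{-1}s)$ with $\|g_k\|_\infty\leq k$, and study the regularized problem
\[
 -\frac{\e^2}{2}\Delta u_k + u_k = \frac{\e^2}{4}g_k(|\nabla u_k|^2) + \s\Phi \;\;\mbox{in}\;\Omega,\quad \partial_\nu u_k=0 \;\;\mbox{on}\;\Gamma.
\]
For each $k$, existence of $u_k\in\X$ follows from a Leray--Schauder fixed point argument for the map $T\colon H^1(\Omega)\to H^1(\Omega)$ sending $v$ to the weak solution $u$ of the linear Neumann problem with right-hand side $\frac{\e^2}{4}g_k(|\nabla v|^2)+\s\Phi$. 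Boundedness of $g_k$ by $k$, together with $\Phi\in L^p(\Omega)$ for $p>d/2$ and $H^2$-regularity for $d\leq 3$, supplies both the $L^\infty$ control and the compactness needed.

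The core step is a uniform a priori estimate on $u_k$ in $\X$. The $L^\infty$ bound is obtained by Stampacchia's truncation with an exponential weight: testing with $\mbox{sign}(u_k)(e^{\alpha|G_m(u_k)|}-1)/\alpha$, where $G_m(w)=\mbox{sign}(w)(|w|-m)^+$ and $\alpha>0$ is chosen proportional to $\e^{-2}$, lets the elliptic term $\frac{\e^2}{2}\nabla u_k\cdot\nabla\test$ absorb the bad-sign contribution $\frac{\e^2}{4}g_k(|\nabla u_k|^2)\test$ after a pointwise completion of squares. The resulting coercive inequality on the super-level sets $\{|u_k|>m\}$, iterated via the standard Stampacchia lemma, yields $\|u_k\|_{L^\infty(\Omega)}\leq C(\|\Phi\|_{L^p})$ independently of $k$, with the hypothesis $p>d/2$ entering through the Sobolev embedding. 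Choosing $u_k$ itself as test function then supplies the uniform $H^1$ bound.

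The main obstacle is passing to the limit $k\to\infty$ in the quadratic term $g_k(|\nabla u_k|^2)$. From the uniform bounds I extract $u_k\rightharpoonup u$ in $H^1$, weakly-$*$ in $L^\infty$, and almost everywhere along a subsequence, but pointwise a.e.\ convergence of the \emph{gradients} is additionally required. I invoke a Boccardo--Murat-type argument: testing the difference of the approximate equations by $\mbox{sign}(u_k-u_\ell)(e^{\delta(u_k-u_\ell)^2}-1)$, and using the uniform $L^\infty$ bound to dominate the natural growth contribution, I conclude that $(u_k)$ is Cauchy in $H^1$. Vitali's theorem, combined with $g_k(\xi)\leq\xi$ and uniform integrability of $|\nabla u_k|^2$ (provided by the $H^1$ bound), then allows passage to the limit in every term of the weak formulation and delivers $u\in\X$ satisfying (\ref{eq::weak}).

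The final assertion $e^{u/2}\in\X$ is immediate: $u\in L^\infty(\Omega)$ gives $e^{u/2}\in L^\infty(\Omega)$, while the chain rule yields $\nabla(e^{u/2})=\frac{1}{2}e^{u/2}\nabla u\in L^2(\Omega)$, placing $e^{u/2}\in H^1(\Omega)$.
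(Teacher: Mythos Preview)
Your approach is essentially the paper's: truncate the quadratic gradient term, solve the approximants by a Leray--Schauder argument, derive uniform $\X$-bounds via exponential Stampacchia test functions, and upgrade weak $H^1$ convergence to strong by a Boccardo--Murat device. The differences are cosmetic---$g_k(s)=s/(1+k^{-1}s)$ in place of the hard cutoff $T_k$, and testing against $u_k-u_\ell$ (Cauchy) rather than $u_k-u$ (weak limit) for the strong gradient convergence; the paper uses $\phi(s)=(e^{|s|}-1)\,\mathrm{sign}(s)$ where you use the Gaussian variant $e^{\delta s^2}-1$.

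One small gap: testing with $u_k$ itself does \emph{not} deliver the uniform $H^1$ bound. From
\[
 \frac{\e^2}{2}\int_\Omega|\nabla u_k|^2\,\d x + \int_\Omega u_k^2\,\d x \le \frac{\e^2}{4}\|u_k\|_{L^\infty}\int_\Omega|\nabla u_k|^2\,\d x + |\s|\,\|\Phi\|_{L^p}\|u_k\|_{L^{p'}}
\]
you can absorb the first right-hand term only when $\|u_k\|_{L^\infty}<2$, which your $L^\infty$ estimate does not guarantee. The fix is the same exponential trick you already used: test with $\phi(u_k)$ satisfying $\phi'(s)-\tfrac12|\phi(s)|\ge c>0$ (e.g.\ $\phi(s)=(e^{|s|}-1)\,\mathrm{sign}(s)$), so that the gradient contribution from $|\nabla u_k|^2\phi(u_k)$ is absorbed pointwise by $|\nabla u_k|^2\phi'(u_k)$. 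This is precisely what the paper does in its proof of Lemma~\ref{lem::linfty&h1}.2.
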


Before proving the theorem, we state two results regarding the regularity of the solution $u$, whose proofs can be found in the appendix.

\begin{lemma}\label{lem::linfty&h1}
 Let $u$ be solution of (\ref{eq::system2}a) with $\Phi\in L^p(\Omega)$, $p>d/2$. Then 
 \begin{enumerate}
  \item for any $\lambda\geq 0$ there exist constants $K_1,K_2=K_2(\lambda)>0$ such that
   \begin{equation}
    \|u\|_{L^\infty(\Omega)} \leq K_1\quad\mbox{and}\quad \|e^{\lambda u}\|_{L^\infty(\Omega)}\leq K_2.
   \end{equation}
   In particular, $e^{\lambda u}\in L^\infty(\Omega)$ for every $\lambda\geq 0$.
   
   \item there exist constants $M_1,M_2>0$ such that
    \begin{equation}
     \|u\|_{H^1(\Omega)}\leq M_1\quad\mbox{and}\quad \|e^{|u|/2}\|_{H^1(\Omega)}\leq M_2.
    \end{equation}
 \end{enumerate}
 
\end{lemma}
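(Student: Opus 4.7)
My plan is to establish the $L^\infty$ estimate first via a Stampacchia-type argument with an exponential truncation, then obtain the $H^1$ bound by a judicious choice of test function that triggers an algebraic cancellation of the natural growth term, and finally read off the two exponential estimates as direct consequences.

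For the $L^\infty$ bound, the obstacle is that the classical truncation $\test = (u-k)^+$ cannot absorb the quadratic gradient term on the right-hand side of \eqref{eq::weak}. I would therefore test with the exponential truncation $\test = \mathrm{sign}(u-k)(e^{\alpha(u-k)^\pm} - 1)\in\X$, where $\alpha$ is chosen large enough that the surviving coefficient $\frac{\e^2}{2}\alpha e^{\alpha(u-k)^+} - \frac{\e^2}{4}(e^{\alpha(u-k)^+} - 1)$ in front of $|\nabla u|^2$ on the super-level set $A_k := \{u>k\}$ stays bounded below by a positive multiple of $e^{\alpha(u-k)^+}$. The Sobolev embedding $H^1(\Omega)\hookrightarrow L^{2d/(d-2)}(\Omega)$ (with $L^q$ for arbitrary $q<\infty$ if $d=2$) together with H\"older's inequality and the hypothesis $\Phi\in L^p(\Omega)$, $p>d/2$, yields a level-set estimate of Stampacchia type $|A_k|^{1+\delta}\le C(k-h)^{-2}|A_h|^{\gamma}$ for some $\delta,\gamma>0$. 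The classical iteration lemma then delivers $\|u^+\|_{L^\infty}\le K_1^+$; repeating the argument on the equation for $-u$ handles $u^-$, and we conclude $\|u\|_{L^\infty}\le K_1$. The exponential bound $\|e^{\lambda u}\|_{L^\infty}\le e^{\lambda K_1}=:K_2(\lambda)$ follows immediately.

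For the $H^1$ bound I exploit that, once $u\in L^\infty(\Omega)$ is known, the function $\test = e^{u/2} - 1$ belongs to $\X$, and its gradient $\nabla\test = \tfrac{1}{2}e^{u/2}\nabla u$ produces an exact cancellation of the natural growth term: substituting into \eqref{eq::weak} and using $e^{u/2} - (e^{u/2}-1) = 1$ collapses the gradient contributions to
\[
 \frac{\e^2}{4}\int_\Omega|\nabla u|^2\,\d{x} + \int_\Omega u\bigl(e^{u/2}-1\bigr)\,\d{x} = \s\int_\Omega\Phi\bigl(e^{u/2}-1\bigr)\,\d{x}.
\]
Since $u$ and $e^{u/2}-1$ share the same sign, the second term on the left is non-negative and can be discarded, while the right-hand side is controlled by $|\s|\,\|\Phi\|_{L^p}\|e^{u/2}-1\|_{L^{p'}}$, finite thanks to $\|u\|_{L^\infty}\le K_1$. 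Combined with the trivial bound $\|u\|_{L^2}\le K_1|\Omega|^{1/2}$ this yields $\|u\|_{H^1}\le M_1$. The $H^1$ estimate for $e^{|u|/2}$ then reduces to the already-obtained bounds via the pointwise identity $|\nabla e^{|u|/2}|^2 = \tfrac{1}{4}e^{|u|}|\nabla u|^2 \le \tfrac{e^{K_1}}{4}|\nabla u|^2$.

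The main technical hurdle is the first step: the exponential truncation is standard once identified, but verifying that the Stampacchia recursion closes requires the hypothesis $p>d/2$ to produce an iteration exponent strictly greater than $1$, together with careful book-keeping to ensure the emerging constants are independent of the level parameter $k$. The $H^1$ estimate, by contrast, hinges on the algebraic cancellation triggered by $\test = e^{u/2}-1$, which I regard as the structural heart of the argument.
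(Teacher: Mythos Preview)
Your Part~1 is essentially the paper's argument: the paper also tests \eqref{eq::weak} with the exponential truncation $\phi(G_k(u))=(e^{|G_k(u)|}-1)\,\mathrm{sign}(G_k(u))$, decomposes the source integral over level sets of $|\Phi|$, and closes via the Kinderlehrer--Stampacchia iteration, the condition $p>d/2$ yielding iteration exponent $\beta=d/(p'(d-2))>1$. (Your displayed recursion has the roles of $h$ and $k$ reversed, but the intent is clear.)

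Your Part~2 is correct but takes a genuinely different and shorter route. The paper does \emph{not} use the single test function $e^{u/2}-1$; instead it runs a second level-set argument, testing first with $\phi(G_{k_0}(u))$ and then with $\phi(T_{k_0}(u))$, and splitting the source term according to $\{|\Phi|>h\}$ so as to absorb the quadratic gradient piece without invoking the $L^\infty$ bound. This yields the gradient estimates for $u$ and $e^{|u|/2}$ using only $\|\Phi\|_{L^{d/2}}$; the $L^\infty$ bound from Part~1 is called upon only at the very end to control the $L^2$ parts of the $H^1$ norms. Your approach, by contrast, spends the $L^\infty$ bound immediately to justify $e^{u/2}-1\in\X$ and to bound $\|e^{u/2}-1\|_{L^{p'}}$; the reward is the exact algebraic cancellation you identify, which collapses the argument to two lines. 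For the lemma as stated both routes are valid; the paper's is more self-contained for the gradient (and would survive under the borderline hypothesis $\Phi\in L^{d/2}$), while yours is cleaner once Part~1 is in hand.
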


{\em Proof of Theorem~\ref{thm::existence}}. Define $\phi\colon\rr\to\rr$ by $\phi(s) = (e^{|s|}-1)\,\mbox{sign}(s)$ and the cut-off function $T_k\colon \rr\to\rr$ by $T_k(s) = \max\{-k,\min\{s,k\}\}$ for some $k\in\rr$.
 We begin by considering the auxiliary problem for $u_k\in\X$:
 \begin{equation}\label{eq::weak_aux}
  \frac{\e^2}{2}\int_\Omega\nabla u_k\.\nabla\test\,\d{x} + \int_\Omega u_k\,\test\,\d{x} = \frac{\e^2}{4}\int_\Omega T_k(|\nabla u_k|^2)\,\test\,\d{x} + \s\int_\Omega T_k(\Phi)\test\,\d{x}.
 \end{equation}
 Since the right-hand side is bounded, the existence of a bounded solution for (\ref{eq::weak_aux}), $k\in\nn$, may be deduced from classical results (see for example \cite{leray} for the existence and \cite{kinder} for the boundedness). Due to the fact that $T_k(|\nabla u_k|^2)\leq |\nabla u_k|^2$ and $|T_k(\Phi)|\leq |\Phi|$ along with Lemma~\ref{lem::linfty&h1}, there exists a function $u\in\X$ such that
 \[
  u_k \rightharpoonup u\;\;\mbox{in}\;\;H^1(\Omega)\quad\mbox{and}\quad u_k\rightharpoonup^* u\;\;\mbox{in}\;\;L^\infty(\Omega).
 \]
 In order to pass to the limit in (\ref{eq::weak_aux}), we still need to show that $\nabla u_k\rightarrow \nabla u$ in $L^2(\Omega)$, i.e., the strong convergence of the gradients of $u_k$ in $L^2(\Omega)$. To do so, we test (\ref{eq::weak_aux}) with $\test_k = \phi(u_k-u)$ to obtain
 \begin{eqnarray}\label{eq::ineq3}
  \frac{\e^2}{2}\int_\Omega\nabla u_k\.\nabla(u_k-u)\test_k'\,\d{x} + \int_\Omega u_k\,\test_k\,\d{x} &&\\
  &&\hspace*{-8em}\leq \frac{\e^2}{4}\int_\Omega|\nabla u_k|^2|\test_k|\,\d{x} + \s\int_\Omega |\Phi||\test_k|\,\d{x}.\nonumber
 \end{eqnarray}
 For the first term on the left-hand side we have
 \[
  \int_\Omega\nabla u_k\.\nabla(u_k-u)\test_k'\,\d{x} = \int_\Omega|\nabla(u_k-u)|^2\test_k'\,\d{x} + \int_\Omega\nabla u\.\nabla(u_k-u)\test_k'\,\d{x}.
 \]
 As for the second term on the left hand-side, we have
 \[
  \int_\Omega u_k\,\test_k\,\d{x} = \int_\Omega |u_k-u|(e^{|u_k-u|}-1) + u\,\test_k\,\d{x} \geq \int_\Omega |u_k-u|^2\,\d{x} + \int_\Omega u\,\test_k\,\d{x}.
 \]
 For the first term on the right-hand side we have
 \begin{eqnarray*}
  \int_\Omega|\nabla u_k|^2|\test_k|\,\d{x} &=& \int_\Omega \nabla u_k\.\nabla(u_k-u)\,|\test_k|\,\d{x} + \int_\Omega\nabla u_k\.\nabla u\,|\test_k|\,\d{x}\\
  &&\hspace*{-6em}= \int_\Omega |\nabla(u_k-u)|^2|\test_k|\,\d{x} + \int_\Omega \nabla u\.\nabla(u_k-u)\,|\test_k|\,\d{x} + \int_\Omega\nabla u_k\.\nabla u\,|\test_k|\,\d{x}
 \end{eqnarray*}
 Due to the compact embedding $H^1(\Omega)\hookrightarrow L^2(\Omega)$, we get a convergent subsequence, denoted again by $\{u_k\}$, such that $u_k\to u$ in $L^2(\Omega)$. Consequently we obtain yet another subsequence, denoted again by $\{u_k\}$, such that $u_k(x)\to u(x)$ for a.e.~$x\in\Omega$, which implies the almost everywhere convergences
 \[
  |\test_k(x)|\to 0 \quad\mbox{and}\quad \test_k'(x)\to 1\quad\mbox{for a.e. }x\in\Omega.
 \]
 From Lebesgue's dominated convergence for these sequences and their boundedness in $L^\infty(\Omega)$, we have the strong convergences
 \[
  \nabla u\,|\test_k|\to 0\quad\mbox{and}\quad \nabla u\,\test_k'\to \nabla u\;\;\mbox{in}\;\;L^2(\Omega),\quad u\,|\test_k|,\;|\Phi||\test_k|\to 0\;\;\mbox{in}\;\;L^1(\Omega).
 \]
 Passing to the limit in (\ref{eq::ineq3}) yields
 \[
  \int_\Omega|\nabla(u_k-u)|^2\test_k'\,\d{x} + \int_\Omega|\nabla(u_k-u)|^2\,\d{x} \to 0,
 \]
 which necessarily implies that $\nabla u_n\to \nabla u$ in $L^2(\Omega)$. Therefore, passing to the limit in (\ref{eq::weak_aux}) yields the solution $u\in\X$ satisfying (\ref{eq::weak}). The fact that $e^{\lambda u}\in\X$ for every $\lambda\geq 0$ follows directly from Lemma~\ref{lem::linfty&h1}.\qquad\endproof

\section{Proof of Theorem~\ref{thm::main}}
 As drafted out above, we use the Schauder fixed point theorem (c.f.~\cite[Corollary~11.2]{gilbarg}) to facilitate the proof. We define the closed, convex and bounded subset of $L^q(\Omega)$
 \[
  M=\left\lbrace w\in L^q(\Omega)\,\left|\,\|w\|_{L^q(\Omega)} \leq c_q\mu_d^{-1},\; w\ge 0\right.\right\rbrace,\quad 
 \]
 for some $q\in(\frac{d}{2},\frac{d}{d-2})$ and with $\mu_d$ as given in (\ref{eq::estimate}).

 For a given $w\in M$, we consider the auxiliary problem for $\Phi$ given by
 \begin{subequations}\label{eq::auxiliary}
  \begin{equation}
   -\frac{\e^2}{2}\Delta u + u = \frac{\e^2}{4}|\nabla u|^2 + \s w \;\;\mbox{in}\;\Omega,\quad
   \partial_\nu u = 0 \;\;\mbox{on}\;\Gamma,
  \end{equation}
  \begin{equation}
   -\Delta \Phi = \|e^u\|_{L^1(\Omega)}^{-1} e^u \;\;\mbox{in}\;\Omega,\quad 
   \Phi = 0 \;\;\mbox{on}\;\Gamma.
  \end{equation}
 \end{subequations}
 This induces a compact mapping 
 \[
  \mbox{H}\colon L^q(\Omega)\to L^q(\Omega);\; w\mapsto \Phi,
 \]
 simply due to the continuity of the solution operators between their respective spaces and the compact embedding $H^1(\Omega)\hookrightarrow L^q(\Omega)$. Indeed, since $w\in L^q(\Omega)$, we obtain a solution $u\in\X$ of (\ref{eq::auxiliary}a) as a result of Theorem~\ref{thm::existence}. Inserting $u$ into (\ref{eq::auxiliary}b) and solving for $\Phi$ gives us $\Phi\in\X$ with $\Phi\ge 0$ due to standard theory of elliptic equations.

 Furthermore, we have $\mbox{H}\colon M\to M$ simply due to the estimates in (\ref{eq::estimate}). A direct application of the Schauder fixed point theorem concludes the proof.\qquad\endproof

\section{Proof of Theorem~\ref{thm::unique}}
 Let $A\colon\X\to H^1(\Omega)^*$ denote the operator defined by
 \[
  \dualpair{A(n)}{\test} = \int_\Omega\left(-\e^2\sqrt{n}^{-1}\Delta\sqrt{n} + \log{n}\right)\test\,\d{x}\qquad\forall\,\test\in H^1(\Omega).
 \]
 Then the weak formulation of (\ref{eq::system}a) can be written as
 \[
  n\in\X:\quad\dualpair{A(n) - \s\Phi}{\test} = \dualpair{F}{\test}\qquad\forall\,\test\in H^1(\Omega).
 \]

 With similar arguments to those by Pinnau, Unterreiter in \cite[Theorem~26]{pinnau} the operator $A$ is well-defined. Moreover, for a fixed $\test\in H^1(\Omega)$, the Gate\^aux derivative of $\dualpair{A(\.)}{\test}\colon \X\to\rr$ at a point $n\in\X$ in any direction $h\in \X$ exists and is given by
 \[
  \dualpair{A'(n)[h]}{\test} = -\frac{\e^2}{2}\int_\Omega\left(\frac{\Delta h}{n} - \frac{\Delta n}{n^2}h - \frac{\nabla n\.\nabla h}{n^2} + \frac{|\nabla n|^2}{n^3}h\right)\test\,\d{x} + \int_\Omega\frac{h}{n}\,\test\,\d{x}.
 \]

 Now let $(n_i,F_i,\Phi_i)\in [\X]^3$, $i=1,2$, be two solutions of (\ref{eq::system}) and set the difference to be $(\delta n,\delta F,\delta \Phi):=(n_1-n_2, F_1-F_2, \Phi_1-\Phi_2) \in [\X]^2$. In particular we have
 \begin{equation}\label{eq::weak_i}
  n_i\in\X:\quad\dualpair{A(n_i)}{\test} = \dualpair{\s\Phi_i + F_i}{\test}\quad\forall\,\test\in H^1(\Omega)
 \end{equation}
 Setting $\test=\delta n$ we obtain by subtraction from (\ref{eq::weak_i})
 \[
  \dualpair{A(n_1)-A(n_2)}{\delta n} = \s\dualpair{\delta \Phi}{\delta n} + \dualpair{\delta F}{\delta n} = \s\dualpair{\delta \Phi}{\delta n},
 \]
 where we used the fact that $\delta F\in\rr$ and $\int_\Omega \delta n\,\d{x} = 0$.

 Let $n_\tau = n_1 - \tau\delta n$, $\tau\in[0,1]$ be the convex combination of $n_1$ and $n_2$. Since the function $[0,1]\ni\tau\mapsto \dualpair{A(n_\tau)}{\delta n}\in\rr$ is differentiable, we obtain, by the mean value theorem, a $\tau\in(0,1)$ such that
 \[
  \e^2\int_\Omega n_\tau\left|\nabla\left(\frac{\delta n}{n_\tau}\right)\right|^2\,\d{x} + \int_\Omega \frac{|\delta n|^2}{n_\tau}\,\d{x} = \dualpair{A'(n_\tau)[\delta n]}{\delta n} = \s\int_\Omega \delta \Phi\delta n\,\d{x}.
 \]

 Now, in order to estimate the first term on the left-hand side from below, we use a variant of the result obtain in \cite[Lemma~24]{pinnau}.
 
 \begin{proposition}\label{prop::bohm}
  Let assumption (A) hold. Then there exists for any $\beta\in\rr$ and $\theta\in(0,1)$ a constant $c_K=c_K(\Omega,\theta,s)>0$ such that for any $n\in\X$ with $\theta\leq n\leq 1/\theta$ and any $\test\in \X$ with $\int_\Omega \test\,\d{x}=0$:
  \begin{equation}
   \int_\Omega n\left|\nabla\left(\frac{\test}{n}\right)\right|^2\,\emph{\d}{x}\geq c_K^2\|\test\|_{L^s(\Omega)}^2,
  \end{equation}
  where $s\in[1,\infty)$ such that the Sobolev embedding $H^1(\Omega)\hookrightarrow L^s(\Omega)$ holds.
 \end{proposition}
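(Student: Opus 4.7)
The plan is to reduce the weighted inequality, via the substitution $\psi:=\test/n$, to a Poincar\'e-type estimate on $\psi$ alone. Using the bounds $\theta\le n\le 1/\theta$ directly in the integrals, the hypotheses yield
\[
 \int_\Omega n\left|\nabla\left(\frac{\test}{n}\right)\right|^2\,\d{x} \;\ge\; \theta\,\|\nabla\psi\|_{L^2(\Omega)}^2,\qquad
 \|\test\|_{L^s(\Omega)}\;=\;\|n\psi\|_{L^s(\Omega)}\;\le\;\theta^{-1}\|\psi\|_{L^s(\Omega)}.
\]
Thus it suffices to establish an inequality of the form $\|\psi\|_{L^s(\Omega)}\le C(\Omega,\theta,s)\,\|\nabla\psi\|_{L^2(\Omega)}$ for every $\psi\in\X$ satisfying the weighted zero-mean condition $\int_\Omega n\psi\,\d{x}=0$ that is inherited from $\int_\Omega\test\,\d{x}=0$. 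The desired $c_K^2$ then arises essentially as $\theta^3/C^2$ after retracing the substitution.

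The principal obstacle is that $\psi$ does \emph{not} have vanishing unweighted mean, so the Poincar\'e--Wirtinger inequality is not directly applicable to $\psi$. To handle this, I decompose $\psi=(\psi-\bar\psi)+\bar\psi$ with $\bar\psi:=|\Omega|^{-1}\int_\Omega\psi\,\d{x}$. For the oscillation part, Poincar\'e--Wirtinger combined with the Sobolev embedding $H^1(\Omega)\hookrightarrow L^s(\Omega)$ gives $\|\psi-\bar\psi\|_{L^s(\Omega)}\le C_P(\Omega,s)\,\|\nabla\psi\|_{L^2(\Omega)}$. The constant $\bar\psi$ is then controlled through the weighted mean condition: rewriting $0=\int_\Omega n\psi\,\d{x}$ as $\bar\psi\int_\Omega n\,\d{x}=-\int_\Omega n(\psi-\bar\psi)\,\d{x}$ and using $\int_\Omega n\,\d{x}\ge\theta|\Omega|$ together with $n\le\theta^{-1}$, one deduces
\[
 |\bar\psi|\;\le\;\frac{1}{\theta^2|\Omega|}\,\|\psi-\bar\psi\|_{L^1(\Omega)}\;\le\;C(\Omega,\theta,s)\,\|\psi-\bar\psi\|_{L^s(\Omega)},
\]
where the last step is H\"older's inequality.

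Combining these two bounds through the triangle inequality $\|\psi\|_{L^s(\Omega)}\le\|\psi-\bar\psi\|_{L^s(\Omega)}+|\bar\psi|\,|\Omega|^{1/s}$ yields the required Poincar\'e-type estimate for $\psi$, and unwinding the substitution produces an admissible constant $c_K=c_K(\Omega,\theta,s)$, independent of $\beta$. The only nontrivial ingredient is the control of $\bar\psi$ via the weighted mean condition; everything else consists of elementary bounds using the hypotheses on $n$, H\"older's inequality, and standard Sobolev-type results on the convex Lipschitz domain $\Omega$.
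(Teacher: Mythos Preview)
Your argument is correct. The substitution $\psi=\test/n$, the two-sided bound $\theta\le n\le 1/\theta$, and the decomposition $\psi=(\psi-\bar\psi)+\bar\psi$ together with Poincar\'e--Wirtinger and the Sobolev embedding yield exactly the claimed estimate; the step controlling $|\bar\psi|$ via the weighted mean condition $\int_\Omega n\psi\,\d{x}=0$ is the one nontrivial point and you handle it cleanly. Note also that $\psi\in\X$ is guaranteed because $n\in\X$ with $n\ge\theta>0$ and $\test\in\X$, so all the integrals make sense.

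As for comparison: the paper does not actually supply a proof of this proposition. It is stated as ``a variant of the result obtained in \cite[Lemma~24]{pinnau}'' and then used without further justification. Your self-contained argument is therefore more than what the paper itself provides; it is elementary, uses only the standing hypotheses, and makes the dependence $c_K=c_K(\Omega,\theta,s)$ explicit. The parameter $\beta$ mentioned in the statement is indeed vestigial (it does not appear in the inequality), presumably inherited from the formulation in the cited reference, and your proof correctly ignores it.
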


 For the second term on the left-hand side, we apply H\"older's inequality to obtain
 \[
  \int_\Omega |\delta n|\,\d{x} = \int_\Omega \left(\frac{|\delta n|}{\sqrt{n_\tau}}\right)\sqrt{n_\tau}\,\d{x} \leq \left(\int_\Omega \frac{|\delta n|^2}{n_\tau}\,\d{x}\right)^\frac{1}{2},
 \]
 where we used the fact that $\int_\Omega n_\tau\,\d{x}=1$. Altogether we have the estimate
 \begin{equation}\label{eq::coercive}
  \dualpair{A'(n_\tau)[\delta n]}{\delta n} \geq \e^2c_K^2\|\delta n\|_{L^s(\Omega)}^2 + \|\delta n\|_{L^1(\Omega)}^2
 \end{equation}
 for some $s\in[1,\infty)$ satisfying the requirements of Proposition~\ref{prop::bohm}.

 Note that by subtraction, $\delta \Phi\in\X$ solves the problem
 \[
  -\Delta \delta \Phi = \delta n \;\;\mbox{in}\;\Omega,\quad \delta \Phi = 0 \;\;\mbox{on}\;\Gamma.
 \]
 As before, the estimates in Proposition~\ref{prop::estimate} yield for $q\in(\frac{4d}{d+2},\frac{d}{d-2})$ the inequality
 \[
  \|\delta \Phi\|_{L^q(\Omega)} \leq c_q\mu_d^{-1}\|\delta n\|_{L^1(\Omega)}.
 \]
 Due to the constraint placed on $q$, we have the Sobolev embedding $H^1(\Omega)\hookrightarrow L^{q'}(\Omega)$, where $q'=\frac{q}{q-1}$, thereby allowing us to choose $s=q'$ in (\ref{eq::coercive}). 

 Putting together all the inequalities obtained above, we have
 \[
  \e^2c_K^2\|\delta n\|_{L^{q'}(\Omega)}^2 + \|\delta n\|_{L^1(\Omega)}^2 \leq c_q\mu_d^{-1}|\s|\|\delta n\|_{L^1(\Omega)}\|\delta n\|_{L^{q'}(\Omega)}.
 \]
  Applying Young's inequality we obtain
 \[
  \e^2c_K^2\|\delta n\|_{L^{q'}(\Omega)}^2 + \|\delta n\|_{L^1(\Omega)}^2 \leq \frac{1}{2}\|\delta n\|_{L^1(\Omega)}^2 + \frac{(c_q\mu_d^{-1}|\s|)^2}{2}\|\delta n\|_{L^{q'}(\Omega)}^2,
 \]
 Using the continuous embedding $L^{q'}(\Omega)\hookrightarrow L^1(\Omega)$, we finally arrive at
 \[
  \left(\frac{2\e^2c_K^2 - c_q^2\mu_d^{-2}|\s|^2}{c_{q'}^2} + 1\right)\norm{\delta n}{L^1(\Omega)}^2\leq 0.
 \]
 In conclusion, for sufficiently small mass
 \[
  |\s| < \mu_d\sqrt{(c_{q'}/c_q)^2 + 2\e^2(c_K/c_q)^2} = \mu_d\sqrt{c_0^2 + 2\e^2c_1^2},
 \]
 we obtain uniqueness for $n$, and consequently for $\Phi$.\qquad\endproof


\section{The semi-classical limit} This result is well-known for the case $\s<0$, i.e., for the quantum drift-diffusion equations (c.f.~\cite{abdallah}). Therefore, we restrict ourselves to the case $\s>0$. Furthermore, we consider only the case $d=2$. 

According to \cite{caglioti1} the \emph{free energy} functional
 \[
  \mathcal{E}_0(n) = \int_\Omega n(\log{n}-1)\,\d{x} - \frac{\s}{2}\int_\Omega n\Phi\,\d{x},\quad  \s<\mu_2=8\pi,
 \]
 where $\Phi$ is a solution of the Poisson problem (\ref{eq::system}c) attains a minimum $n_0$ in the set $$\mathcal{P}=\left\lbrace n\in L^1(\Omega)\;|\; n\ge 0,\; \int_\Omega n\,\d{x}=1,\; n\log n\in L^1(\Omega)\right\rbrace.$$
 Moreover, the minimizer is unique (c.f.~\cite[Theorem~3.2]{caglioti2}). We denote its associated potential by $\Phi_0$ and recall the relationship
 \begin{equation}\label{eq:n_0}
  -\Delta \Phi_0 = n_0 = \|e^{\s\Phi_0}\|_{L^1(\Omega)}e^{\s\Phi_0},
 \end{equation}
 which solves the classical stationary system of self-gravitating particles (c.f.~\cite{biler,suzuki}),
 \begin{subequations}\label{eq:classical}
 \begin{eqnarray}
  -\Delta n + \s\mbox{div}\left(n\nabla \Phi\right) = 0\hspace*{0.1em} \;\;\mbox{in}\;\Omega,&\qquad 
  \partial_\nu n - \s n\,\partial_\nu\Phi = 0 \;\;\mbox{on}\;\Gamma, \\
  -\Delta \Phi = n \;\;\mbox{in}\;\Omega,&\qquad\hspace*{5.2em} 
  \Phi = 0 \;\;\mbox{on}\;\Gamma,
 \end{eqnarray}
 \end{subequations}

 Now consider the energy functional associated to (\ref{eq::system})
 \[
  \mathcal{E}_\e(n) = \e^2 \mathcal{F}(n) + \mathcal{E}_0(n),\quad \s<\mu_2,
 \]
 where $\mathcal{F}$ is the \emph{Fisher information} given by
 \[
  \mathcal{F}(n) = \int_\Omega |\nabla\sqrt{n}|^2\,\d{x}.
 \]
 It is easy to see that $\mathcal{E}_\e$ is weakly lower semicontinuous, strictly convex and coercive on $\mathcal{P}_\e=\left\lbrace n\in \mathcal{P}\;|\;\sqrt{n}\in H^1(\Omega)\right\rbrace$. Indeed, $\mathcal{E}_0$ is equivalent to the functional
 \[
  \mathcal{G}(\Phi) = \frac{\s}{2}\int_\Omega|\nabla\Phi|^2 - \log\left(\int_\Omega e^{\s\Phi}\,\d{x}\right) - 1,\quad \s<\mu_2,
 \]
 which is uniformly bounded from below for all $\Phi\in H_0^1(\Omega)$ due to Moser \cite{moser}. Therefore, it attains a unique minimum $n_\e$ in the set $\mathcal{P}_\e$, and in particular in $\mathcal{P}$.

\begin{theorem}\label{thm:limit}
 Let $n_0$ and $n_\e$ be solutions of the problems 
 \[
  \min_{n\in\mathcal{P}} \mathcal{E}_0(n)\quad\mbox{and}\quad \min_{n\in\mathcal{P}_\e} \mathcal{E}_\e(n) 
 \]
 respectively. Then there exists $(n_*,\Phi_*,F_*)\in \X\times\X\times \rr$, which solves the classical self-gravitation system (\ref{eq:classical}) such that the following convergences hold for $\e\to 0_+$:
 \begin{eqnarray*}
  &\sqrt{n_\e} \rightharpoonup \sqrt{n_*}\;\;\mbox{in}\;\;H^1(\Omega),\quad \Phi_\e\to\Phi_*\;\;\mbox{in}\;\;H^1(\Omega),\quad
  u_\e \to \s\Phi_*\;\;\mbox{in}\;\;L^2(\Omega),& \\ &F_\e\to F_*=-\log \|e^{\s\Phi_*}\|_{L^1(\Omega)}\;\;\mbox{in}\;\;\rr.&
 \end{eqnarray*}
 Furthermore, if $n_0$ is a unique minimizer of $\mathcal{E}_0$, then $n_*\equiv n_0$ and $\Phi_*\equiv \Phi_0$.
\end{theorem}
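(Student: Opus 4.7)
The plan is a $\Gamma$-convergence style passage to the limit: use the classical minimizer $n_0$ as a competitor in $\mathcal{E}_\e$ to obtain a uniform Fisher information bound on $n_\e$, then extract weak limits and identify them via the classical minimization problem.

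First I check that $n_0 \in \mathcal{P}_\e$, so $n_0$ is admissible for $\mathcal{E}_\e$. Elliptic bootstrapping on $-\Delta\Phi_0 = n_0$ combined with the Moser--Trudinger inequality in 2D gives $\Phi_0 \in L^\infty(\Omega)$, whence $\sqrt{n_0} \in H^1(\Omega)$ via the Boltzmann relation in (\ref{eq:n_0}). Minimality of $n_\e$ over $\mathcal{P}_\e$ and of $n_0$ over $\mathcal{P} \supset \mathcal{P}_\e$ sandwich the energies,
\[
 \mathcal{E}_0(n_0) \leq \mathcal{E}_0(n_\e) \leq \mathcal{E}_\e(n_\e) \leq \mathcal{E}_\e(n_0) = \e^2 \mathcal{F}(n_0) + \mathcal{E}_0(n_0),
\]
yielding the key bound $\mathcal{F}(n_\e) \leq \mathcal{F}(n_0)$ and $\mathcal{E}_0(n_\e) \to \mathcal{E}_0(n_0)$. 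Combined with $\int n_\e = 1$, this means $\{\sqrt{n_\e}\}$ is bounded in $H^1(\Omega)$; extracting a subsequence, $\sqrt{n_\e} \rightharpoonup \sqrt{n_*}$ in $H^1$, strongly in $L^p$ for any $p<\infty$ by Rellich, and a.e., so $n_\e \to n_*$ in $L^p$ with $n_* \geq 0$ and $\int n_* = 1$. Feeding this into $-\Delta\Phi_\e = n_\e$ with zero Dirichlet data, elliptic regularity gives $\Phi_\e \to \Phi_*$ in $H^1(\Omega) \cap L^\infty(\Omega)$ with $-\Delta\Phi_* = n_*$.

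To identify the limit, weak lower semicontinuity of $\mathcal{E}_0$ together with the energy convergence forces $\mathcal{E}_0(n_*) = \mathcal{E}_0(n_0)$, so $n_*$ is also a minimizer and $(n_*, \Phi_*)$ solves the classical system (\ref{eq:classical}). Hence $n_*$ obeys the Boltzmann relation as in (\ref{eq:n_0}), which in turn defines $F_* = -\log\|e^{\s\Phi_*}\|_{L^1}$. For the quasi potential, rewriting (\ref{eq::system}a) via $u_\e = \log n_\e - F_\e$ gives
\[
 u_\e - \s\Phi_\e = \e^2\,\sqrt{n_\e}^{-1}\Delta\sqrt{n_\e},
\]
and testing against $\sqrt{n_\e}\,g$ for $g \in \X$ (using $\partial_\nu\sqrt{n_\e}=0$) yields
\[
 \int_\Omega \sqrt{n_\e}(u_\e - \s\Phi_\e)\,g\,\d{x} = -\e^2\int_\Omega \nabla g \cdot \nabla\sqrt{n_\e}\,\d{x} = O(\e^2),
\]
so $\sqrt{n_\e}(u_\e - \s\Phi_\e) \to 0$ weakly. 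Using strict positivity of $n_*$ (inherited from $\Phi_* \in L^\infty$ through the Boltzmann formula) to remove the weight $\sqrt{n_\e}$ in the limit, one obtains $u_\e \to \s\Phi_*$ in $L^2(\Omega)$, and $F_\e \to F_*$ follows from dominated convergence applied to $F_\e = -\log\int_\Omega e^{u_\e}\,\d{x}$. When $n_0$ is the unique minimizer, every subsequential limit must coincide with $n_0$, and a standard subsequence argument promotes subsequential convergence to convergence of the whole family.

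The principal obstacle is precisely this last step: upgrading the weighted convergence of $u_\e - \s\Phi_\e$ to strong $L^2$ convergence. A priori $\{u_\e\}$ is not known to be $L^2$-bounded uniformly in $\e$, because the lower bound on $n_\e$ furnished by Theorem~\ref{thm::main} may degenerate as $\e \to 0$. Closing this gap should rely on combining the entropy estimate $\int n_\e \log n_\e \leq C$ inherent in the energy with the Moser--Trudinger bound preventing $n_*$ from vanishing, so that $\log n_\e$ remains uniformly controlled as $\e \to 0$ and the formal inversion of $\sqrt{n_*}$ becomes rigorous.
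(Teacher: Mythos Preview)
Your compactness and identification steps---the sandwich $\mathcal{E}_0(n_0)\le\mathcal{E}_0(n_\e)\le\mathcal{E}_\e(n_\e)\le\mathcal{E}_\e(n_0)$ giving $\mathcal{F}(n_\e)\le\mathcal{F}(n_0)$, the extraction $\sqrt{n_\e}\rightharpoonup\sqrt{n_*}$ in $H^1$, $n_\e\to n_*$ in $L^p$, $\Phi_\e\to\Phi_*$ in $H^1$, and the lower-semicontinuity argument showing $n_*$ minimizes $\mathcal{E}_0$---match the paper's proof exactly.

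The gap you flag in the endgame is genuine for your route: from $\int_\Omega\sqrt{n_\e}(u_\e-\s\Phi_\e)\,g\,\d{x}=O(\e^2)$ you only get weak convergence of the weighted quantity, and neither the removal of $\sqrt{n_\e}$ nor the upgrade from weak to strong $L^2$ is free without an $\e$-uniform lower bound on $n_\e$. The paper avoids this obstacle by \emph{reversing the order}. It proves $F_\e\to F_*$ first, not via $u_\e$ at all: testing the quantum Euler--Lagrange equation with $\test=\sqrt{n_\e}$ and the \emph{classical} Euler--Lagrange identity $\log n_*-\s\Phi_*=F_*$ with $\test=n_\e$, then subtracting, yields
\[
 \e^2\mathcal{F}(n_\e)+\int_\Omega n_\e\log(n_\e/n_*)\,\d{x}+\s\int_\Omega(\Phi_*-\Phi_\e)n_\e\,\d{x}=F_\e-F_*,
\]
and every term on the left is controlled by the convergences already in hand. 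With $F_\e\to F_*$ established, the paper then uses the purely algebraic relation $u_\e-\s\Phi_*=\log(n_\e/n_*)+(F_*-F_\e)$ (from $u_\e=\log n_\e-F_\e$ and $\s\Phi_*=\log n_*-F_*$) to bound
\[
 \|u_\e-\s\Phi_*\|_{L^2(\Omega)}\le\|\log(n_\e/n_*)\|_{L^2(\Omega)}+|\Omega|^{1/2}|F_\e-F_*|.
\]
No PDE testing, no weight to remove, and no weak-to-strong upgrade is needed; the positivity of $n_*$ (from $\Phi_*\in L^\infty$ and the Boltzmann relation) together with $n_\e\to n_*$ handles the logarithm. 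So the missing idea is: do not extract $u_\e$ from the quantum equation; extract it from the formula $u_\e=\log n_\e-F_\e$ after first nailing down $F_\e$ via the subtracted Euler--Lagrange identity.
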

\begin{proof}
 We begin by showing that $\{\sqrt{n_\e}\}\subset H^1(\Omega)$ is bounded. Indeed, since $n_\e$ is a minimum of $\mathcal{E}_\e$, we have
 \[
  \e^2\mathcal{F}(n_\e) + \mathcal{E}_0(n_\e) = \mathcal{E}_\e(n_\e) \le \mathcal{E}_\e(n_0) = \e^2\mathcal{F}(n_0) + \mathcal{E}_0(n_0),
 \]
 but $\mathcal{E}_0(n_0)\le \mathcal{E}_0(n_\e)$ since $n_0$ is a minimum of $\mathcal{E}_0$. Hence, $\mathcal{F}(n_\e)\le \mathcal{F}(n_0)$ for all $\e>0$, which was to be shown. We can then extract a subsequence, denoted again by $n_\e$, such that $\sqrt{n_\e}\rightharpoonup \sqrt{n_*}$ in $H^1(\Omega)$ and $n_\e\to n_*$ in $L^2(\Omega)$ 
 for some $n_*\in \mathcal{P}$, where the second convergence follows from the compact Sobolev embedding $H^1(\Omega)\hookrightarrow L^4(\Omega)$. Furthermore, we have 
 \[
  \mathcal{E}_0(n_0) \le \liminf_{\e\to 0_+} \mathcal{E}_0(n_\e) \le \liminf_{\e\to 0_+} \mathcal{E}_\e(n_\e) \le \limsup_{\e\to 0_+} \mathcal{E}_\e(n_\e) \le \mathcal{E}_0(n_0),
 \]
 which implies that $\mathcal{E}_0(n_0)=\lim_{\e\to 0_+}\mathcal{E}_\e(n_\e)$. On the other hand, by the weakly lower $L^2(\Omega)$-semicontinuity of the functional $\mathcal{E}_0$,
 \[
  \mathcal{E}_0(n_*) \le \liminf_{\e\to 0_+}\mathcal{E}_0(n_\e) \leq \limsup_{\e\to 0_+}\mathcal{E}_\e(n_\e)=\mathcal{E}_0(n_0).
 \]
 Therefore, $n_*$ is a minimizer of $\mathcal{E}_0$. The strong convergence $\Phi_\e\to \Phi_*:=(-\Delta_0)^{-1}n_*$ in $H^1(\Omega)$ follows easily from the strong convergence $n_\e\to n_*$ in $L^2(\Omega)$, due to the continuity of the solution operator $(-\Delta_0)^{-1}$.
 

 Now consider the Euler-Lagrange equation associated to $\mathcal{E}_\e$, i.e., the variational formulation of (\ref{eq::system}a), given by
 \[
   \e^2\int_\Omega\nabla\sqrt{n_\e}\.\nabla\test\,\d{x} + \int_\Omega (\log n_\e - \s\Phi_\e)\test\sqrt{n_\e}\,\d{x} = \int_\Omega F_\e\test \sqrt{n_\e}\,\d{x}\quad\forall \test\in H^1(\Omega).
 \]
 Similarly we have the Euler-Lagrange equation associated to $\mathcal{E}_0$, given by
 \[
  \int_\Omega (\log n_* - \s\Phi_*)\test\,\d{x} = \int_\Omega F_*\test\,\d{x}\quad\forall \test\in L^1(\Omega),
 \]
 where $F_*\in\rr$ is the Lagrange multiplier for the constraint $\int_\Omega n_*\,\d{x}=1$ with
 \begin{equation}\label{thm:limit:eq:1}
  F_*=-\log \|e^{\s\Phi_*}\|_{L^1(\Omega)},\;\;\mbox{since}\;\; \log n_* = \s\Phi_* + F_* \in L^\infty(\Omega).
 \end{equation}
 Therefore, by testing the former variational formulation with $\test=\sqrt{n_\e}$ and the latter with $\test=n_\e$, and taking the difference of the resulting equations, we obtain
 \[
   \e^2\mathcal{F}_\e(n_\e) + \int_\Omega \log (n_\e/n_*)n_\e\,\d{x} + \s\int_\Omega (\Phi_* - \Phi_\e)n_\e\,\d{x} = (F_\e-F_*),
 \]
 where we have used the fact that $F_\e,F_*\in\rr$ and $\int_\Omega n_\e\,\d{x}=1$. Due to the convergences derived above, we conclude that $F_\e \to F_*$ in $\rr$. At this point, it is easy to see from (\ref{thm:limit:eq:1}) that the pair $(u_*,\Phi_*)$ solves (\ref{eq:classical}a). To see that it also solves (\ref{eq:classical}b), we notice that $\Phi_*$ is a minimizer of $\mathcal{G}$ and that (\ref{eq:classical}b) is simply the Euler-Lagrange equation associated to $\mathcal{G}$. Hence, $(n_*,\Phi_*)\in \X\times \X$ is indeed a solution of (\ref{eq:classical}).
 
 Furthermore, from the representation $u_\e = \log n_\e - F_\e$ we obtain the strong convergence $u_\e\to \s\Phi_*$ in $L^2(\Omega)$. Indeed, by taking the difference of the two representations, multiplying the resulting equation with $(u_\e - \s\Phi_*)$ and integrating over $\Omega$, we obtain
 \begin{eqnarray*}
  \int_\Omega |(u_\e - \s\Phi_*)|^2\,\d{x} &=& \int_\Omega \log(n_\e/n_*)(u_\e-\s\Phi_*)\,\d{x} + \int_\Omega (F_\e-F_*)(u_\e-\s\Phi_*)\,\d{x} \\
  &\le& \left(\|\log(n_\e/n_*)\|_{L^2(\Omega)} + |\Omega|^{\frac{1}{2}}|F_\e-F_*|\right)\|u_\e-\s\Phi_*\|_{L^2(\Omega)},
 \end{eqnarray*}
 which clearly yields the required convergence.

 Finally, if the minimizer $n_0$ is unique, then $n_*\equiv n_0$ and consequently $\Phi_*\equiv \Phi_0$.
\end{proof}

\section{Numerical Simulations}
In this section we show two numerical simulations that validate the theoretical results obtained above. In both cases, we considered the unit disk $D\subset\rr^2$ with scaled Planck constant $\e = 1\times 10^{-3}$. An adaptive finite element method was used to solve the coupled problem (\ref{eq::system2}) for $(u,V)$ iteratively. The outer iteration consist of a Picard iteration procedure for the potential $V$ and the inner iteration consists of Newton's method to solve (\ref{eq::system2}a) for the quasi potential $u$.

{\bf Case 1:} The first case pertains to the existence of stationary states for system~(\ref{eq::system2}) with a large mass ($\s=10\pi$), which clearly exceeds the threshold ($\s=8\pi$) of existence in the classical setting. The numerical results are shown in Figure~\ref{fig::exist}.
One clearly observes the similarity of $u$ and $V$. This similarity shows that the quasi potential $u$ is a slight perturbation of the potential $V$ when the scaled Planck constant is small.
\begin{figure}[ht]\centering
 \subfigure[Quasi potential u]{
  \includegraphics[width=17em]{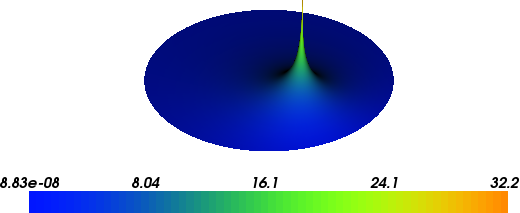}
  \label{fig::exist::quasi}
 }
 \subfigure[Potential V]{
  \includegraphics[width=17em]{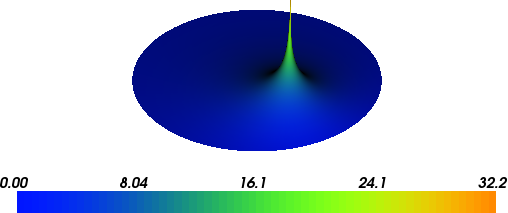}
  \label{fig::exist::potential}
 }
 \label{fig::exist}
 \caption{A stationary solution $(u,V)$ for $\s=10\pi$ with $F=-20.188$}
\end{figure}

{\bf Case 2:} The second case corresponds to the non-uniqueness of stationary states when their quasi Fermi-levels $F$ are identical and their mass $\s$ exceeds the threshold given in Theorem~\ref{thm::unique}. As in Case~1, we set $\s=10\pi$. By shifting the position of the starting value for the iteration procedure in Case~1, we obtained another solution with the same quasi Fermi-level. This solution is in fact just a shift in position of the solution we obtained in Case~1. 
\begin{figure}[ht]\centering
 \subfigure[Quasi potential u]{
  \includegraphics[width=17em]{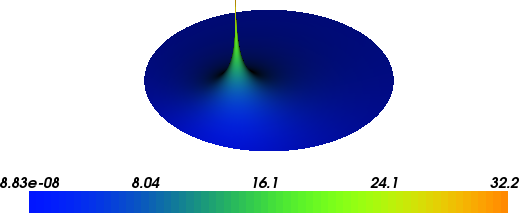}
  \label{fig::nonunique::quasi}
 }
 \subfigure[Potential V]{
  \includegraphics[width=17em]{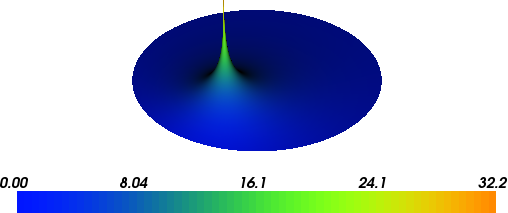}
  \label{fig::nonunique::potential}
 }
 \caption{Another stationary solution $(u,V)$ for $\s=10\pi$ with $F=-20.188$}
\end{figure}


\appendix
\section{Proof of Lemma~\ref{lem::linfty&h1}}
\subsection{Proof of Lemma~\ref{lem::linfty&h1}.1}
 As in Theorem~\ref{thm::existence} we define the function $\phi\colon\rr\to\rr$ by $\phi(s) = (e^{|s|}-1)\,\mbox{sign}(s)$ and let \[G_k(s) = s - T_k(s) = s - \max\{-k,\min\{s,k\}\} = \min\{k-s,\max\{0,s-k\}\}.\] Set $A_k = \{|u|>k\}$. Using $\test_k=\phi(G_k(u))$, $k\geq k_0\geq 1$ as a test function in (\ref{eq::weak}),
 \[
  \frac{\e^2}{4}\int_{A_k}|\nabla G_k(u)|^2|\test_k'|\,\d{x} + \frac{\e^2}{4}\int_{A_k}|\nabla G_k(u)|^2\,\d{x} + \int_{A_k} |u||\test_k|\,\d{x} \leq \int_{A_k} |V||\test_k|\,\d{x}.
 \]
 Note that by definition $\psi(s) = \int_0^{|s|}\sqrt{\phi'(t)}\,\d{t} = 2\,(e^{|s|/2}-1)$. Furthermore, it is easy to see that there exist constants $\mu_1,\mu_2>0$ such that
 \begin{equation}\label{lem::linfty::eq::ineq_psi}
  \mu_1|\psi(s)|^2 \leq |\phi(s)|,\quad |s|\geq 0\quad\mbox{and}\mbox\quad |\phi(s)|\leq \mu_2|\psi(s)|^2,\quad |s|\geq 1.
 \end{equation}
 Writing the left-hand side of the inequality above in terms of $\psi(G_k(u))$ and using the first inequality in (\ref{lem::linfty::eq::ineq_psi}) for half of the third term, we get
 \begin{eqnarray}\label{lem::linfty::eq::ineq1}
  \frac{1}{4}\min\{\e^2,2\mu_1\}\|\psi(G_k(u))\|_{H^1(A_k)}^2 + \frac{\e^2}{4}\|\nabla G_k(u)\|_{L^2(A_k)}^2 && \\
  &&\hspace*{-7em} + \frac{1}{2}\int_{A_k} |u||\test_k|\,\d{x} \leq \int_{A_k} |V||\test_k|\,\d{x} \nonumber.
 \end{eqnarray}
 Now decompose the right-hand side as follows
 \begin{eqnarray*}
  \int_{A_k} |V||\test_k|\,\d{x} &\leq& \int_{(A_k\setminus A_{k+1})\cap\{|V|>1\}} |V||\test_k|\,\d{x} \\
  &&\hspace*{4em} +\int_{A_{k+1}\cap\{|V|>1\}} |V||\test_k|\,\d{x} + \int_{A_k\cap\{|V|\leq 1\}} |\test_k|\,\d{x} \\
  &=& J_1 + J_2 + J_3.
 \end{eqnarray*}
 For $k_0\geq 4$, we can absorb $J_3$ into the left-hand side of (\ref{lem::linfty::eq::ineq1}). As for $J_1$, we have
 \[
  |J_1| \leq |\phi(1)|\int_{A_k\cap \{|V|>1\}}|V|\,\d{x} \leq |\phi(1)|\|V\|_{L^p(\{|V|>1\})}|A_k|^{1/p'}
 \]
 Since $p'\in(1,\frac{2d}{d-2})$ we can use the H\"older, interpolation and Young inequalities, along with the Sobolev embedding $H^1\hookrightarrow L^{\frac{2d}{d-2}}$ to obtain
 \begin{eqnarray*}
  |J_2| &\leq& \|V\|_{L^p(\{|V|>1\})}\|\test_k\|_{L^{p'}(A_{k+1})}\\
  &\leq& \|V\|_{L^p(\{|V|>1\})}\|\test_k\|_{L^1(A_{k+1})}^{1-\frac{d}{2p}}\|\test_k\|_{L^{\frac{d}{d-2}}(A_{k+1})}^{\frac{d}{2p}} \\
  &\leq& c_1\|\psi(G_k(u))\|_{L^\frac{2d}{d-2}(A_k)}^2 + c_2\|V\|_{L^p(\{|V|>1\})}^{\frac{2p}{2p-d}}\|\test_k\|_{L^1(A_k)}\\
  &\leq& c_1c_3\|\psi(G_k(u))\|_{H^1(A_k)}^2 + c_2\|V\|_{L^p(\{|V|>1\})}^{\frac{2p}{2p-d}}\|\test_k\|_{L^1(A_k)}.
 \end{eqnarray*}
 So by choosing $c_1$ sufficiently small and $k_0$ sufficiently large, $J_2$ may also be absorbed into the left-hand side of (\ref{lem::linfty::eq::ineq1}), leaving us with
 \[
  \delta_1\|\psi(G_k(u))\|_{H^1(A_k)}^2 + \delta_2\|G_k(u)\|_{H^1(A_k)}^2 \leq \delta_3|A_k|^{1/p'},
 \]
 for suitable constants $\delta_i>0$, $i=1,2$. Here we used $|s|\leq |\phi(s)|$ for $|s|\geq 0$ and the fact that $|G_k(u)|=|u|-k$ a.e.~on $A_k$.

 Now if $h>k>k_0$, then $A_h\subset A_k$ and for arbitrary $m\in\nn$,
 \[
  \left(\int_{A_k}(|u|-k)^m\,\d{x}\right)^{2/m} \geq \left(\int_{A_k}(h-k)^m\,\d{x}\right)^{2/m} = (h-k)^2|A_h|^{2/m}.
 \]
 Hence, by the Sobolev embedding $H^1\hookrightarrow L^{\frac{2d}{d-2}}$
 \[
  c_3\delta_2(h-k)^2|A_h|^{(d-2)/d} \leq c_3\delta_2\|G_k(u)\|_{L^{\frac{2d}{d-2}}(A_k)}^2 \leq \delta_3|A_k|^{1/p'}.
 \]
 Defining $\zeta(h):= |A_h|^{(d-2)/d}$ and $\beta:= d/p'(d-2)>1$, we finally obtain
 \[
  \zeta(h) \leq \frac{\delta}{(h-k)^2}\,\zeta(k)^\beta,\quad\mbox{for}\;h>k\geq k_0,
 \]
 with $\delta = \delta_3/c_3\delta_2$. From a lemma of Kinderlehrer and Stampacchia (c.f.~\cite[II. Lemma B1]{kinder}), it follows that $\zeta(k)=0$ for every $k\geq K_1\geq k_0$, where
 \[
  K_1 = k_0 + 2^{\beta/(\beta-1)}\zeta(k_0)^{(\beta-1)/2}\sqrt{\delta}.
 \]
 From the definition of $\zeta$ we finally obtain the uniform bound $|u|\leq K_1$ a.e.~on $\Omega$, which gives the bounds required.\qquad\endproof

\subsection{Proof of Lemma~\ref{lem::linfty&h1}.2}
 Following the proof of Lemma~\ref{lem::linfty&h1}.1 we consider $\test_k=\phi(G_k(u))$ for $k\geq k_0(h)=\max\{1,4h\}$, with a suitable $h$ chosen later, as a test function in (\ref{eq::weak}) to obtain
 \begin{eqnarray*}
  \frac{1}{4}\min\{\e^2,2\mu_1\}\|\psi(G_k(u))\|_{H^1(A_k)}^2 + \frac{\e^2}{4}\|\nabla G_k(u)\|_{L^2(A_k)}^2 &&\\
  &&\hspace*{-7em} + \frac{1}{2}\int_{A_k} |u||\test_k|\,\d{x} \leq \int_{A_k} |V||\test_k|\,\d{x}.
 \end{eqnarray*}
 This time we decompose the right-hand side as follows
 \begin{eqnarray*}
  &&\int_{A_k} |V||\test_k|\,\d{x} = \int_{(A_k\setminus A_{k+1})\cap\{|V|>h\}} |V||\test_k|\,\d{x} \\
  &&\hspace*{4em}+ \int_{A_{k+1}\cap\{|V|>h\}} |V||\test_k|\,\d{x} + h\int_{A_k\cap\{|V|\leq h\}} |\test_k|\,\d{x} = J_1 + J_2 + J_3.
 \end{eqnarray*}
 For each of the $J_i$, $i\in\{1,2,3\}$, we have the bounds
 \begin{eqnarray*}
  |J_1| &\leq& |\phi(1)|\int_{\{|V|>h\}}|V|\,\d{x} \leq |\phi(1)|h^{\frac{2-d}{2}}\|V\|_{L^{\frac{d}{2}}(\{|V|>h\})}^{\frac{d}{2}}\\
  |J_2| &\leq& \|V\|_{L^{\frac{d}{2}}(\{|V|>h\})}\|\phi(G_k(u))\|_{L^{\frac{2d}{d-2}}(A_k)} \\
  &\leq& c_1 \mu_2\|V\|_{L^{\frac{d}{2}}(\{|V|>h\})}\|\psi(G_k(u))\|_{H^1(A_k)}^2 \\
  |J_3| &\leq& \frac{1}{4}\int_{A_k} k_0\,|\test_k|\,\d{x} \leq \frac{1}{4}\int_{A_k} |u||\test_k|\,\d{x},
 \end{eqnarray*}
 where we used the Sobolev embedding $H^1\hookrightarrow L^{\frac{2d}{d-2}}$ and the second inequality in (\ref{lem::linfty::eq::ineq_psi}) for $J_2$. By absorbing $J_2$ and $J_3$ into the left-hand side, we obtain
 \begin{equation}\label{lem::h1::eq::ineq1}
  \delta\,\|\psi(G_k(u))\|_{H^1(A_k)}^2 + \frac{\e^2}{4}\|G_k(u)\|_{H^1(A_k)}^2 \leq c_4(h,V),
 \end{equation}
 where, for $h$ large enough,
 \begin{equation}\label{lem::h1::eq::delta}
  \delta = \frac{1}{4}\left(\min\{\e^2,2 \mu_1\}-4 c_1 \mu_2\|V\|_{L^{\frac{d}{2}}(\{|V|>h\})}\right) > 0.
 \end{equation}
 Hence $\psi(G_k(u))\in H^1(\Omega)$ and $\nabla G_k(u)\in [L^2(\Omega)]^2$ for any $k\geq k_0(h)$.

 Now we fix $h$ such that (\ref{lem::h1::eq::delta}) holds and use $\test_{k_0} = \phi(T_{k_0}(u))$ as a test function in the weak formulation (\ref{eq::weak}), to obtain
 \begin{eqnarray*}
  \frac{\e^2}{2}\int_\Omega|\nabla T_{k_0}(u)|^2|\test_{k_0}'|\,\d{x} + \int_\Omega |u||\test_{k_0}|\,\d{x} &\leq& \frac{\e^2}{4}\int_\Omega|\nabla T_{k_0}(u)|^2|\test_{k_0}|\,\d{x} \\
  &&\hspace*{-4em}+ |\phi(k_0(h))|\left[\frac{\e^2}{4}\int_\Omega|\nabla G_{k_0}(u)|^2\,\d{x} + \int_\Omega |V|\,\d{x}\right],
 \end{eqnarray*}
 which leads to 
 \begin{equation}\label{lem::h1::eq::ineq2}
  \frac{\e^2}{4}\int_\Omega|\nabla \psi(T_{k_0}(u))|^2\,\d{x} + \frac{\e^2}{4}\int_\Omega|\nabla T_{k_0}(u)|^2\,\d{x} + \int_\Omega |u||\test_{k_0}|\,\d{x} \leq c_5(h,V).
 \end{equation}
 Combining (\ref{lem::h1::eq::ineq1}) and (\ref{lem::h1::eq::ineq2}), we obtain a uniform bound for the gradient term $\nabla u$,
 \[
  \int_\Omega |\nabla u|^2\,\d{x} = \int_{\{|u|\leq k_0\}} |\nabla T_{k_0}(u)|^2\,\d{x} + \int_{\{|u|>k_0\}} |\nabla G_{k_0}(u)|^2\,\d{x}\leq c_6(h,V)
 \]
 and uniform bound for the gradient term $\nabla e^{|u|/2}$,
 \begin{eqnarray*}
  \int_\Omega |\nabla e^{|u|/2}|^2\,\d{x} &=& \frac{1}{4}\left[\int_{\{|u|\leq k_0\}} |\nabla \psi(T_{k_0}(u))|^2\,\d{x} + e^{k_0}\int_{\{|u|>k_0\}} |\nabla \psi(G_{k_0}(u))|^2\,\d{x}\right] \\
  &\leq& c_7(h,V).
 \end{eqnarray*}
 Hence, Lemma~\ref{lem::linfty&h1}.1 and the estimates above give us constants $M_1,M_2>0$ such that
 \[
  \|u\|_{H^1(\Omega)}\leq M_1\quad\mbox{and}\quad \|\nabla e^{|u|/2}\|_{H^1(\Omega)}\leq M_2,
 \]
 which concludes the proof.\qquad\endproof

\bibliographystyle{siam}
\bibliography{biblio} 

\end{document}